\theoremstyle{definition}
\newtheorem{theorem}{Theorem}[section]
\newtheorem{criterion}{Criterion}[section]
\newtheorem{lemma}[theorem]{Lemma}
\newtheorem{Remark}[theorem]{Remark}
\newtheorem{Example}[theorem]{Example}
\newtheorem{Question}[theorem]{Question}
\numberwithin{equation}{section}
\begin{document}

\title{Positive elliptic-elliptic rotopulsators on Clifford tori of nonconstant size project onto regular polygons}


\author{Pieter Tibboel\\
Department of Mathematical Sciences\\
Xi'an Jiaotong-Liverpool University\\
Suzhou, China\\
Pieter.Tibboel@xjtlu.edu.cn}


\maketitle
\begin{abstract}
  Let $q_{1}$,...,$q_{n}$ be the position vectors of the point masses of the curved $n$-body problem. Consider any positive elliptic-elliptic rotopulsator solution $q_{i}^{T}=(r\cos{(\theta+\alpha_{i})},r\sin{(\theta+\alpha_{i})},\rho\cos{(\phi+\beta_{i})},\rho\sin{(\phi+\beta_{i})})$, $i\in\{1,...,n\}$, where $\alpha_{1},...,\alpha_{n},\beta_{1},...,\beta_{n}\in [0,2\pi)$ are constants, $\phi$, $\theta$, $r$ and $\rho$ are twice-differentiable, continuous, nonconstant functions, $r^{2}+\rho^{2}=1$, $r\geq 0$ and $\rho\geq 0$. We prove that the if the configuration of the point masses is of nonconstant size, the configuration of the vectors
  $(r\cos{(\theta+\alpha_{i})},r\sin{(\theta+\alpha_{i})})^{T}$ is a regular polygon, as is the configuration of the vectors $(\rho\cos{(\phi+\beta_{i})},\rho\sin{(\phi+\beta_{i})})^{T}$, $i\in\{1,...,n\}$.
\end{abstract}

\section{Introduction}
    Let $\sigma=\pm 1$. The $n$-body problem in spaces of constant Gaussian curvature, or curved $n$-body problem for short, is the problem of finding the dynamics of point masses \begin{align*}q_{1},...,\textrm{ }q_{n}\in\mathbb{M}_{\sigma}^{3}=\{(x_{1},x_{2},x_{3},x_{4})^{T}\in\mathbb{R}^{4}|x_{1}^{2}+x_{2}^{2}+x_{3}^{2}+\sigma x_{4}^{2}=\sigma\},\end{align*} with respective masses $m_{1}>0$,..., $m_{n}>0$, determined by the system of differential equations
  \begin{align}\label{EquationsOfMotion Curved}
   \ddot{q}_{i}=\sum\limits_{j=1,\textrm{ }j\neq i}^{n}\frac{m_{j}(q_{j}-\sigma(q_{i}\odot q_{j})q_{i})}{(\sigma -\sigma(q_{i}\odot q_{j})^{2})^{\frac{3}{2}}}-\sigma(\dot{q}_{i}\odot\dot{q}_{i})q_{i},\textrm{ }i\in\{1,...,\textrm{ }n\},
  \end{align}
  where for $x$, $y\in\mathbb{M}_{\sigma}^{3}$  the product $\cdot\odot\cdot$ is defined as
  \begin{align*}
    x\odot y=x_{1}y_{1}+x_{2}y_{2}+x_{3}y_{3}+\sigma x_{4}y_{4}.
  \end{align*}
  The study of the curved $n$-body problem has applications to for example geometric mechanics, Lie groups and algebras, non-Euclidean and differential geometry and stability theory, the theory of polytopes and topology (see for example \cite{D6}) and for $n=2$ goes back as far as the 1830s (see \cite{BM}, \cite{BMK}, \cite{CRS}, \cite{DK}, \cite{DPS1}, \cite{DPS2}, \cite{DPS3} and \cite{KH} for a historical overview and recent results). However, the first paper giving an explicit $n$-body problem in spaces of constant Gaussian curvature for general $n\geq 2$ was published in 2008 by Diacu, P\'erez-Chavela and Santoprete (see \cite{DPS1}, \cite{DPS2} and \cite{DPS3}). This breakthrough then gave rise to further results for the $n\geq 2$ case in \cite{DeDZ}, \cite{D1}--\cite{DK}, \cite{DPo}, \cite{DT}, \cite{PS}--\cite{ZZ} and the references therein.

  Rotopulsators are solutions to (\ref{EquationsOfMotion Curved}) for which the configuration of the point massses may only rotate or change size, but retains its shape over time. They were first introduced by Diacu and Kordlou in \cite{DK} and can be divided into five classes, two for the positive curvature case ($\sigma=1$) and three for the negative curvature case ($\sigma=-1$). Positive elliptic-elliptic rotopulsators are one of the two possible types for the positive curvature case and can be defined as follows: Let
  \begin{align*}
    R(x)=\begin{pmatrix}
      \cos{x} & -\sin{x} \\
      \sin{x} & \cos{x}
    \end{pmatrix}.
  \end{align*}
  If we write $q_{i}=(q_{i1},q_{i2},q_{i3},q_{i4})^T$, $i\in\{1,...,n\}$, then we call $q_{1},...,q_{n}$ a \textit{positive elliptic-elliptic rotopulsator} if there exist nonnegative scalar functions $r_{i}$, $\rho_{i}$ for which $r_{i}^{2}+\rho_{i}^{2}=1$, scalar functions $\theta$, $\phi$ and constants $\alpha_{i}$, $\beta_{i}\in\mathbb{R}$, such that
  \begin{align}
    &\begin{pmatrix}
      q_{i1}\\ q_{i2}
    \end{pmatrix}=r_{i}R(\theta+\alpha_{i})\begin{pmatrix}
      1 \\ 0
    \end{pmatrix}\textrm{ and }\begin{pmatrix}
      q_{i3}\\ q_{i4}
    \end{pmatrix}=\rho_{i}R(\phi+\beta_{i})\begin{pmatrix}
      1 \\ 0
    \end{pmatrix}\label{Rotopulsator identities}.
  \end{align}
  Another definition we will need is the definition of a Clifford torus: Let $a>0$ and $b>0$. By a Clifford torus, we mean any set \begin{align*}\{(x_{1},x_{2},x_{3},x_{4})\in\mathbb{R}^{4}|x_{1}^{2}+x_{2}^{2}=a^{2},\textrm{ }x_{3}^{2}+x_{4}^{2}=b^{2}\}\end{align*}
  Finally, before we get to this paper's main result, a remark on terminology is needed: Throughout this paper, if $v_{1}$, $v_{2}$,..., $v_{n}\in\mathbb{R}^{2}$ are $n$ not necessarily distinct vectors, then if among these vectors there are $k$ distinct vectors $v_{j_{1}}$,...,$v_{j_{k}}$, such that for all other vectors $v_{i}$, $i\in\{1,...,n\}\backslash\{j_{1},...,j_{k}\}$ we have that there is an $l\in\{1,...,k\}$ such that $v_{i}=v_{j_{l}}$, then we say that the vectors $v_{1}$,...,$v_{n}$ represent the vertices of a polygon with $k$ vertices. With that said, we will continue on the topic of positive elliptic-elliptic rotopulsators:
  In \cite{T2} it was proven that if the $\beta_{i}$ are all equal and the $r_{i}$ and $\rho_{i}$ are independent of $i$ and not constant, then the configuration of the point masses has to be a regular polygon and in \cite{T6} it was shown that in that case all masses have to be equal. For the case that the $r_{i}$ and $\rho_{i}$ are independent of $i$ and not constant and the $\beta_{i}$ are not necessarily equal, almost nothing is known, except that in \cite{DK} for $n=3$ nonexistence was proven for Lagrangian configurations and in \cite{DT}, for $n=4$, nonexistence was proven for rectangular configurations and all masses equal. In this paper we will prove for general $n$ that positive elliptic-elliptic rotopulsators on Clifford tori of nonconstant size project onto regular polygons. Specifically:
  \begin{theorem}\label{Main Theorem 1}
    Let $q_{1},...,q_{n}$ be a positive elliptic-elliptic rotopulsator solution of (\ref{EquationsOfMotion Curved}) with $r_{i}$ and $\rho_{i}$ independent of $i$ and not constant and the size of the configuration not fixed. Then both the $(q_{i1},q_{i2})^{T}$, $i\in\{1,...,n\}$ and the $(q_{i3},q_{i4})^{T}$, $i\in\{1,...,n\}$ represent vertices of a regular polygon.
  \end{theorem}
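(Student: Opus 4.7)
My plan is to reduce (\ref{EquationsOfMotion Curved}) to a family of scalar balance identities and then exploit the fact that $r$ is nonconstant to convert these into analytic identities in $u := r^{2}$, from which the regular-polygon structure can be extracted.

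First, substitute the ansatz (\ref{Rotopulsator identities}) with $r_i = r$ and $\rho_i = \rho$ into (\ref{EquationsOfMotion Curved}) for $\sigma = 1$, and use the identity
\[
q_i \odot q_j = r^{2}\cos(\alpha_i - \alpha_j) + \rho^{2}\cos(\beta_i - \beta_j),
\]
which depends only on $r, \rho$ and the constant angle differences. Expanding $\cos(\theta + \alpha_j), \sin(\theta + \alpha_j)$ via the angle-addition formulas in terms of $\cos(\theta + \alpha_i), \sin(\theta + \alpha_i)$ (and similarly for the $(\phi, \beta)$ pair) and taking the pair of linear combinations of coordinates $1, 2$ that isolate $\cos(\theta + \alpha_i)$ and $\sin(\theta + \alpha_i)$ (and analogously for coordinates $3, 4$), one obtains for each $i$ the scalar identities
\begin{align*}
\ddot{r} - r\dot{\theta}^{2} + rK &= r\rho^{2} C_{i}, & 2\dot{r}\dot{\theta} + r\ddot{\theta} &= r S_{i},\\
\ddot{\rho} - \rho\dot{\phi}^{2} + \rho K &= -\rho r^{2} C_{i}, & 2\dot{\rho}\dot{\phi} + \rho\ddot{\phi} &= \rho \tilde{S}_{i},
\end{align*}
where $K = \dot{q}_i \odot \dot{q}_i$ is automatically $i$-independent and
\[
C_{i} = \sum_{j \neq i} \frac{m_j \bigl[\cos(\alpha_i - \alpha_j) - \cos(\beta_i - \beta_j)\bigr]}{(1 - (q_i \odot q_j)^{2})^{3/2}}, \quad S_{i} = \sum_{j \neq i} \frac{m_j \sin(\alpha_j - \alpha_i)}{(1 - (q_i \odot q_j)^{2})^{3/2}},
\]
with $\tilde{S}_{i}$ the analogue of $S_{i}$ with $\alpha$ replaced by $\beta$. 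Since the left-hand sides are $i$-independent, so must $C_{i}, S_{i}, \tilde{S}_{i}$ be at every time in the interval of existence.

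Next, with $u := r^{2}$, one has $q_i \odot q_j = \cos(\beta_i - \beta_j) + u\bigl[\cos(\alpha_i - \alpha_j) - \cos(\beta_i - \beta_j)\bigr]$, an affine function of $u$. Since $r$ is nonconstant, $u$ sweeps out a nontrivial open interval $I \subset [0, 1]$, and each of $C_i(u), S_i(u), \tilde{S}_i(u)$ is analytic in $u$ on $I$ (and extends as a multi-valued analytic function to $\mathbb{C}$ off the finite set of branch points where $|q_i \odot q_j| = 1$, corresponding to would-be binary collisions). Consequently, the balance conditions $C_i \equiv C_k$, $S_i \equiv S_k$, $\tilde{S}_i \equiv \tilde{S}_k$ hold as analytic identities on $I$, hence on the maximal common domain of regularity by analytic continuation.

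The main obstacle is translating these analytic identities into the regular-polygon conclusion. My strategy is to examine the branch-point singularities of each individual summand: the $(i, j)$-summand of $S_i(u)$ has $(u - u_{\ast})^{-3/2}$ leading behaviour at the two branch points $u_{\ast}^{(i,j),\pm} = [\pm 1 - \cos(\beta_i - \beta_j)] / [\cos(\alpha_i - \alpha_j) - \cos(\beta_i - \beta_j)]$, whenever the denominator is nonzero, with an explicit leading coefficient proportional to $m_j \sin(\alpha_j - \alpha_i)$. Matching the positions and leading coefficients of these singularities on the two sides of each identity $S_i \equiv S_k$, $\tilde{S}_i \equiv \tilde{S}_k$, $C_i \equiv C_k$ should, after some combinatorial bookkeeping, force for every pair $i, k$ a bijection between the multisets $\{(\alpha_i - \alpha_j, \beta_i - \beta_j) \bmod 2\pi : j \neq i\}$ and $\{(\alpha_k - \alpha_j, \beta_k - \beta_j) \bmod 2\pi : j \neq k\}$. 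This is equivalent to the multiset $\{(\alpha_l, \beta_l) \bmod 2\pi\}_{l=1}^{n}$ being invariant under translation by each $(\alpha_i - \alpha_1, \beta_i - \beta_1)$; projecting onto the two factors and using that a finite subset of $\mathbb{R}/2\pi\mathbb{Z}$ invariant under translation by all its own pairwise differences is a single coset of a finite cyclic subgroup, i.e., a regular polygon, one deduces the claim for both $\{\alpha_l\}$ and $\{\beta_l\}$.
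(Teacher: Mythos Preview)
Your reduction to the scalar identities and the idea of treating them as analytic functions of $u=r^{2}$ matches the paper's set-up (its Criterion~\ref{Criterion}); where your argument becomes a genuine sketch is the passage from ``match branch points and leading coefficients'' to ``bijection of the multisets $\{(\alpha_i-\alpha_j,\beta_i-\beta_j)\bmod 2\pi\}$''. Two concrete obstacles are swept into the phrase ``some combinatorial bookkeeping'':

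\textbf{(a) The degenerate terms.} When $\cos(\alpha_i-\alpha_j)=\cos(\beta_i-\beta_j)$ the $(i,j)$ summand is \emph{constant} in $u$ (and vanishes in $C_i$), so it has no branch points at all and is invisible to your singularity-matching scheme. The paper spends three lemmas (its Lemma~\ref{Lemma 3}, Lemma~\ref{Lemma 4}, and the lemma yielding~(\ref{The Ultimate Identity1})--(\ref{The Ultimate Identity2})) ruling this case out, and the argument hinges on the hypothesis that the configuration has nonconstant size together with the shape-rigidity of a rotopulsator (the angle $\gamma_{i_1i_2,i_1i_3}$ is constant). You never invoke the nonconstant-size hypothesis, so as written your proof cannot exclude these terms.

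\textbf{(b) The swap ambiguity.} Two summands share the same unordered pair of branch points $\{u^+,u^-\}$ not only when $(\cos(\alpha_i-\alpha_j),\cos(\beta_i-\beta_j))=(\cos(\alpha_k-\alpha_{j'}),\cos(\beta_k-\beta_{j'}))$ but also when the cosine pair is \emph{negated}. The paper's Lemma~\ref{Lemma 2} eliminates the second possibility, again using the identities~(\ref{The Ultimate Identity1})--(\ref{The Ultimate Identity2}) coming from shape preservation. Without that, branch-point matching gives you only equality of cosines up to a common sign, and even once signs are fixed you still have $\alpha_i-\alpha_j=\pm(\alpha_k-\alpha_{j'})\bmod 2\pi$; recovering the actual angle differences (and hence your translation-invariance conclusion) needs a further argument that you have not supplied.

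By contrast, the paper does not attempt a global bijection. After establishing that no degenerate terms occur and that matching of terms forces equality of cosines (its Lemma~\ref{Lemma 2}), it orders the distinct $\alpha$-values, picks the smallest gap $\alpha_{j_2}-\alpha_{j_1}$, subtracts the $i=j_1$ and $i=j_2$ instances of~(\ref{Crit3}), and shows directly that if some other gap were strictly larger then a specific term could not be cancelled---using~(\ref{Crit1}) (indeed $S_i=0$, which follows from the angular-momentum integral, a fact you state only as $S_i=S_k$) to finish the contradiction. Your final translation-invariance step is clean once the bijection is in hand, but the hard content of the theorem lies precisely in (a) and (b).
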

  We will now first formulate a criterion and lemmas needed to prove Theorem~\ref{Main Theorem 1} in section~\ref{Section Background Theory}, after which we will prove Theorem~\ref{Main Theorem 1} in section~\ref{Section proof of main theorem 1}.
  \section{Background theory}\label{Section Background Theory}
  Throughout this paper we will use the notation introduced in the previous section for rotopulsators. Additionally, we will adopt the notation $\langle\cdot,\cdot\rangle$ for the Euclidean inner product. To prove Theorem~\ref{Main Theorem 1}, ideally we would like to repeat the main argument used to prove that for positive elliptic-elliptic rotopulsators for which $\beta_{i}=\beta_{j}$ for all $i$, $j\in\{1,...,n\}$ the configuration of the point masses has to be a regular polygon (see \cite{T}, \cite{T2}).
  The problem with positive elliptic-elliptic rotopulsators for which $r_{i}$ and $\rho_{i}$ are independent of $i$, $i\in\{1,...,n\}$, is, as we will see, that there is a possibility that there are $i$, $j\in\{1,...,n\}$, $i\neq j$ for which $\cos{(\alpha_{j}-\alpha_{i})}-\cos{(\beta_{j}-\beta_{i})}=0$. To illustrate that, we will start by formulating a lemma and a criterion for the existence of positive elliptic-elliptic rotopulsators we need. These were proven for general $r_{i}$ and $\rho_{i}$, $i\in\{1,...,n\}$ in \cite{DK}, but as the proofs are very short, we will add them here:
  \begin{lemma}\label{Lemma 1}
    Let $q_{1},...,q_{n}$ be a positive elliptic-elliptic rotopulsator solution of (\ref{EquationsOfMotion Curved}) with $r_{i}=r$ and $\rho_{i}=\rho$ independent of $i$. Then $2\dot{r}\dot{\theta}+r\ddot{\theta}=0$ and $2\dot{\rho}\dot{\phi}+\rho\ddot{\phi}=0$.
  \end{lemma}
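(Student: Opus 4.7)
The plan is to extract both identities as conservation laws of ``angular momentum'' in the two rotation planes $(q_{i1},q_{i2})$ and $(q_{i3},q_{i4})$. Concretely, for each $i$ I would compute the quantity $A_i := q_{i1}\ddot q_{i2} - q_{i2}\ddot q_{i1}$ in two different ways and equate the results. Exactly the same argument with indices $3,4$ in place of $1,2$ will produce the second identity, so I will only describe the first.

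On the one hand, differentiating the ansatz $q_{i1} = r\cos(\theta+\alpha_i)$, $q_{i2} = r\sin(\theta+\alpha_i)$ twice and collecting terms using $\cos^2+\sin^2=1$ gives, after the usual cancellations,
\[
A_i = r(2\dot r\dot\theta + r\ddot\theta),
\]
an expression which is manifestly independent of $i$. On the other hand, substituting (\ref{EquationsOfMotion Curved}) (with $\sigma=1$), the ``parallel'' pieces $-(q_i\odot q_j)q_i$ and $-(\dot q_i\odot\dot q_i)q_i$ contribute multiples of $q_{i1}q_{i2}-q_{i2}q_{i1}=0$ and drop out, leaving
\[
A_i = \sum_{j\neq i}\frac{m_j(q_{i1}q_{j2}-q_{i2}q_{j1})}{(1-(q_i\odot q_j)^2)^{3/2}} = r^2\sum_{j\neq i}\frac{m_j\sin(\alpha_j-\alpha_i)}{D_{ij}^{3/2}},
\]
where $D_{ij}:=1-(r^2\cos(\alpha_j-\alpha_i)+\rho^2\cos(\beta_j-\beta_i))^2$ is symmetric in $i,j$.

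To close the argument, I would multiply the two expressions for $A_i$ by $m_i$ and sum over $i$. On the right, the resulting double sum is antisymmetric under the swap $i\leftrightarrow j$ (the factor $m_im_jD_{ij}^{-3/2}$ is symmetric while $\sin(\alpha_j-\alpha_i)$ changes sign) and therefore vanishes. What is left is
\[
M\,r(2\dot r\dot\theta + r\ddot\theta)=0
\]
with $M=\sum_i m_i>0$, hence $r(2\dot r\dot\theta + r\ddot\theta)=0$. The analogous computation with the $(q_{i3},q_{i4})$-components yields $\rho(2\dot\rho\dot\phi+\rho\ddot\phi)=0$.

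The only delicate step is passing from $r(2\dot r\dot\theta + r\ddot\theta)=0$ to the claim $2\dot r\dot\theta + r\ddot\theta=0$ (and likewise for $\rho$); this is handled by continuity, since the identity holds on the open set $\{r>0\}$ and extends to its closure, with the zero set of the continuous function $r$ being either empty or nowhere dense on the interval of existence of the rotopulsator. Apart from this small observation the proof is a bookkeeping calculation, with no genuine obstacle.
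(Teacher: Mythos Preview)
Your argument is correct and is essentially the paper's proof: both compute $\sum_{i} m_{i}(q_{i1}\ddot q_{i2}-q_{i2}\ddot q_{i1})$, the paper by invoking the angular-momentum conservation $\sum_{j}m_{j}q_{j}\wedge\dot q_{j}=\mathrm{const}$ from \cite{D3}, you by substituting (\ref{EquationsOfMotion Curved}) directly and using the antisymmetry of the interaction terms, and both then insert the rotopulsator ansatz to reach $Mr(2\dot r\dot\theta+r\ddot\theta)=0$. Your version is in fact a bit more self-contained and more careful about the extra factor of $r$ and its removal by continuity.
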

  \begin{proof}
    It was proven in \cite{D3} that $\sum\limits_{j=1}^{n}m_{j}q_{j}\wedge\dot{q}_{j}=\mathbf{0}$ where $\wedge$ represents the wedge product and $\mathbf{0}$ is the zero bivector. If $e_{1}$, $e_{2}$, $e_{3}$ and $e_{4}$ are the standard basis vectors of $\mathbb{R}^{4}$, then
    \begin{align*}
      0e_{1}\wedge e_{2}=\sum\limits_{j=1}^{n}m_{j}(q_{j1}\ddot{q}_{j2}-q_{j2}\ddot{q}_{j1})e_{1}\wedge e_{2}.
    \end{align*}
    Using (\ref{Rotopulsator identities}), $q_{j1}\ddot{q}_{j2}-q_{j2}\ddot{q}_{j1}$ can be rewritten as $2\dot{r}\dot{\theta}+r\ddot{\theta}$, so $0=\sum\limits_{j=1}^{n}m_{j}(2\dot{r}\dot{\theta}+r\ddot{\theta})$, which gives that $2\dot{r}\dot{\theta}+r\ddot{\theta}=0$. Repeating this argument replacing $e_{1}$ and $e_{2}$ with $e_{3}$ and $e_{4}$ and $q_{j1}$ and $q_{j2}$ with $q_{j3}$ and $q_{j4}$ gives that $2\dot{\rho}\dot{\phi}+\rho\ddot{\phi}=0$.
  \end{proof}
  \begin{criterion}\label{Criterion}
    Let $q_{1},...,q_{n}$ be a positive elliptic-elliptic rotopulsator solution of (\ref{EquationsOfMotion Curved}) with $r_{i}=r$ and $\rho_{i}=\rho$ independent of $i$. Then
    \begin{align}
      0 &=\sum\limits_{j=1,\textrm{ }j\neq i}^{n}\frac{m_{j}\sin{(\alpha_{j}-\alpha_{i})}}{(1-(\cos{(\beta_{j}-\beta_{i})}+r^{2}(\cos{(\alpha_{j}-\alpha_{i})}-\cos{(\beta_{j}-\beta_{i})}))^{2})^{\frac{3}{2}}}, \label{Crit1}\\
      0 &=\sum\limits_{j=1,\textrm{ }j\neq i}^{n}\frac{m_{j}\sin{(\beta_{j}-\beta_{i})}}{(1-(\cos{(\beta_{j}-\beta_{i})}+r^{2}(\cos{(\alpha_{j}-\alpha_{i})}-\cos{(\beta_{j}-\beta_{i})}))^{2})^{\frac{3}{2}}}\textrm{ and}\label{Crit2}\\
      \delta &=r\rho^{2}\sum\limits_{j=1,\textrm{ }j\neq i}^{n}\frac{m_{j}(\cos{(\alpha_{j}-\alpha_{i})}-\cos{(\beta_{j}-\beta_{i})})}{(1-(\cos{(\beta_{j}-\beta_{i})}+r^{2}(\cos{(\alpha_{j}-\alpha_{i})}-\cos{(\beta_{j}-\beta_{i})}))^{2})^{\frac{3}{2}}},\label{Crit3}
    \end{align}
    where $\delta=\ddot{r}+r\rho^{2}((\dot{\phi})^{2}-(\dot{\theta})^{2})+r\left(\frac{\dot{r}}{\rho}\right)^{2}$.
  \end{criterion}
  \begin{proof}
    Inserting (\ref{Rotopulsator identities}) into (\ref{EquationsOfMotion Curved}), writing out the expressions for the first two coordinates of the vectors on both sides of (\ref{EquationsOfMotion Curved}) and multiplying both sides of the resulting equations from the left with $R(\theta+\alpha_{i})^{-1}$ and using that $r^{2}+\rho^{2}=1$, we get that
    \begin{align}
    &(\ddot{r}-r\dot{\theta}^{2})\begin{pmatrix}
     1\\ 0
   \end{pmatrix}+(2\dot{r}\dot{\theta}+r\ddot{\theta})\begin{pmatrix}
     0\\1
   \end{pmatrix}\nonumber\\
   &=r\sum\limits_{j=1,\textrm{ }j\neq i}^{n}\frac{m_{j}\begin{pmatrix}
     \rho^{2}(\cos{(\alpha_{j}-\alpha_{i})}-\cos{(\beta_{j}-\beta_{i})})\\
     \sin{(\alpha_{j}-\alpha_{i})}
   \end{pmatrix}}{(1-(\cos{(\beta_{j}-\beta_{i})}+r^{2}(\cos{(\alpha_{j}-\alpha_{i})}-\cos{(\beta_{j}-\beta_{i})}))^{2})^{\frac{3}{2}}}\nonumber\\
   &-r(\dot{r}^{2}+r^{2}\dot{\theta}^{2}+\dot{\rho}^{2}+\rho^{2}\dot{\phi}^{2})\begin{pmatrix}
     1\\ 0
   \end{pmatrix}\label{CriterionStep1}.
  \end{align}
  Collecting terms for the first coordinates of the vectors on both sides of (\ref{CriterionStep1}) gives (\ref{Crit3}) and by Lemma~\ref{Lemma 1} collecting terms for the second coordinates of the vectors on both sides of (\ref{CriterionStep1}) gives (\ref{Crit1}). Repeating the argument used so far writing out the expressions for the third and fourth coordinates of the vectors on both sides of (\ref{EquationsOfMotion Curved}) instead then gives (\ref{Crit2}). The identity for the third coordinates can be rewritten as (\ref{Crit3}), which proves this criterion.
  \end{proof}
  We can now explain the problem touched upon at the start of this section in more detail: The idea behind our proof of Theorem~\ref{Main Theorem 1}, which is essentially the idea behind proving that the configurations of the point masses have to be regular polygons in \cite{T} and \cite{T2}, is to use that the terms in the identities of Criterion~\ref{Criterion} are linearly independent under certain conditions on the $\alpha_{i}$ and $\beta_{i}$, $i\in\{1,...,n\}$, if $r_{i}=r$ and $\rho_{i}=\rho$ are independent of $i$ and not constant. However, we immediately see that a prerequisite to using such a linear independence argument is that $\cos{(\alpha_{j}-\alpha_{i})}-\cos{(\beta_{j}-\beta_{i})}\neq 0$ for those terms. We will therefore now prove three lemmas leading to the fact that we can exclude the possibility of $\cos{(\alpha_{j}-\alpha_{i})}-\cos{(\beta_{j}-\beta_{i})}=0$ and then give the exact conditions for the terms of the sums on the right-hand side of (\ref{Crit1}), (\ref{Crit2}) and (\ref{Crit3}) to be linearly independent:
  \begin{lemma}\label{Lemma 3}
    Let $q_{1},...,q_{n}$ be a positive elliptic-elliptic rotopulsator solution of (\ref{EquationsOfMotion Curved}) with $r_{i}=r$ and $\rho_{i}=\rho$ independent of $i$ and not constant and the size of the configuration of the point masses not fixed. Then for all but possibly one $i\in\{1,...,n\}$ we have that \begin{align*}\cos{(\alpha_{j}-\alpha_{i})}-\cos{(\beta_{j}-\beta_{i})}\neq 0\textrm{ for at least one }j\in\{1,...,n\}\backslash\{i\}.\end{align*}
  \end{lemma}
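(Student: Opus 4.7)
The approach is proof by contradiction. Suppose two distinct indices $i_{1},i_{2}\in\{1,\ldots,n\}$ both satisfy $\cos{(\alpha_{j}-\alpha_{i_{k}})}=\cos{(\beta_{j}-\beta_{i_{k}})}$ for every $j\in\{1,\ldots,n\}\setminus\{i_{k}\}$, $k=1,2$. Applying the relation for $i=i_{1}$ to $j=i_{2}$ yields $\cos{(\alpha_{i_{2}}-\alpha_{i_{1}})}=\cos{(\beta_{i_{2}}-\beta_{i_{1}})}$, whence either $\alpha_{i_{2}}-\beta_{i_{2}}\equiv\alpha_{i_{1}}-\beta_{i_{1}}$ or $\alpha_{i_{2}}+\beta_{i_{2}}\equiv\alpha_{i_{1}}+\beta_{i_{1}}$ modulo $2\pi$; both alternatives simultaneously would force $q_{i_{1}}=q_{i_{2}}$, a forbidden collision.

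I would then treat the first alternative, the second being entirely symmetric. Using the gauge freedom $\phi\mapsto\phi-c$, $\beta_{i}\mapsto\beta_{i}+c$, which leaves every $q_{i}$ invariant, I shift so that $\alpha_{i_{1}}=\beta_{i_{1}}$ and $\alpha_{i_{2}}=\beta_{i_{2}}$. For each $j\notin\{i_{1},i_{2}\}$, the relation $\cos{(\alpha_{j}-\alpha_{i_{k}})}=\cos{(\beta_{j}-\alpha_{i_{k}})}$ forces either $\alpha_{j}=\beta_{j}$ or $\alpha_{j}+\beta_{j}=2\alpha_{i_{k}}$, for $k=1,2$. A short enumeration of the four subcases, together with $\alpha_{i_{1}}\neq\alpha_{i_{2}}$ (otherwise $q_{i_{1}}=q_{i_{2}}$), shows that every mixed subcase collapses $q_{j}$ onto either $q_{i_{1}}$ or $q_{i_{2}}$; the sole survivor is $\alpha_{j}=\beta_{j}$. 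Hence $\alpha_{j}=\beta_{j}$ for all $j\in\{1,\ldots,n\}$.

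The punchline is then the direct computation
\[q_{i}\odot q_{j}=r^{2}\cos{(\alpha_{j}-\alpha_{i})}+\rho^{2}\cos{(\beta_{j}-\beta_{i})}=(r^{2}+\rho^{2})\cos{(\alpha_{j}-\alpha_{i})}=\cos{(\alpha_{j}-\alpha_{i})},\]
which is independent of $t$. Every spherical distance $d(q_{i},q_{j})=\arccos{(q_{i}\odot q_{j})}$ is therefore constant, so the configuration has fixed size, contradicting the hypothesis. The second alternative is handled identically after shifting to $\alpha_{i_{k}}=-\beta_{i_{k}}$; the same case analysis yields $\alpha_{j}=-\beta_{j}$ for every $j$, and one finds $q_{i}\odot q_{j}=\cos{(\beta_{j}-\beta_{i})}$, again constant.

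The main subtlety to watch is that non-constancy of $r$ is strictly weaker than non-constancy of the configuration size: the two degenerate families above provide solutions with $r$ nonconstant but all pairwise distances frozen, so the hypothesis on the configuration size is genuinely needed and not implied by nonconstancy of $r$ and $\rho$ alone. The delicate step is to keep the $\pm$ alternatives in $\cos{X}=\cos{Y}$ straight across all four pair combinations and to correctly discharge each mixed subcase as a collision; once that bookkeeping is done, the rest is a one-line trigonometric identity.
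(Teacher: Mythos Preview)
Your argument is correct, but it takes a substantially longer path than the paper's. The identity you reach only at the very end,
\[
q_{i}\odot q_{j}=r^{2}\cos(\alpha_{j}-\alpha_{i})+\rho^{2}\cos(\beta_{j}-\beta_{i})=\cos(\alpha_{j}-\alpha_{i})
\]
whenever $\cos(\alpha_{j}-\alpha_{i})=\cos(\beta_{j}-\beta_{i})$, is available \emph{directly} from the hypothesis for $i\in\{i_{1},i_{2}\}$ and every $j$, with no case analysis, no gauge shift, and no need to resolve the $\pm$ alternatives in $\cos X=\cos Y$. The paper simply writes this down: all inner products $\langle q_{i_{1}},q_{j}\rangle$ and $\langle q_{i_{2}},q_{j}\rangle$ are constant, $q_{i_{1}}$ and $q_{i_{2}}$ are linearly independent (antipodality being singular), and hence every triangle $q_{i_{1}},q_{i_{2}},q_{j}$ is rigid; combined with the shape-preservation built into the definition of a rotopulsator, the whole configuration has fixed size.

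Your detour through the four-case enumeration and the normalisation $\alpha_{j}=\pm\beta_{j}$ for all $j$ does purchase something: you obtain constancy of \emph{all} pairwise inner products outright, so the contradiction with ``nonconstant size'' is immediate without appealing to shape preservation. The paper's route is two lines but leans on the rotopulsator hypothesis at the last step; yours is self-contained but costs the bookkeeping. One minor imprecision: several of your excluded subcases (e.g.\ $2\alpha_{i_{1}}\equiv 2\alpha_{i_{2}}$ with $\alpha_{i_{1}}\equiv\alpha_{i_{2}}+\pi$) produce antipodal pairs $q_{j}=-q_{i_{k}}$ rather than coincident ones; these are still singularities of (\ref{EquationsOfMotion Curved}) and hence excluded, but they are not ``collisions'' in the usual sense.
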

  \begin{proof}
    If there are $i_{1}$, $i_{2}\in\{1,...,n\}$ such that \begin{align*}\cos{(\alpha_{j}-\alpha_{i})}-\cos{(\beta_{j}-\beta_{i})}=0\textrm{ for all }j\in\{1,...,n\},\textrm{ }i\in\{i_{1},i_{2}\},\end{align*} then
    \begin{align}\langle q_{i},q_{j}\rangle &=r^{2}\cos{(\alpha_{j}-\alpha_{i})}+\rho^{2}\cos{(\beta_{j}-\beta_{i})}=r^{2}\cos{(\alpha_{j}-\alpha_{i})}+\rho^{2}\cos{(\alpha_{j}-\alpha_{i})}\nonumber\\
    &=(r^{2}+\rho^{2})\cos{(\alpha_{j}-\alpha_{i})}=\cos{(\alpha_{j}-\alpha_{i})}\label{Angles}\end{align}
    for $i\in\{i_{1},i_{2}\}$,
    which means that the angles between $q_{j}$, $q_{i_{1}}$ and $q_{i_{2}}$ are constant. As the length of each position vector is $1$, we have that $q_{i_{1}}$ and $q_{i_{2}}$ are linearly dependent if and only if $q_{i_{1}}=-q_{i_{2}}$, which would mean that $1-\langle q_{i_{1}},q_{i_{2}}\rangle^{2}=0$, which would make (\ref{EquationsOfMotion Curved}) undefined, so $q_{i_{1}}$ and $q_{i_{2}}$ are linearly independent. As the lengths of $q_{i_{1}}$, $q_{i_{2}}$ and $q_{j}$ are all equal to $1$ and therefore fixed and the angles between $q_{i_{1}}$ and $q_{i_{2}}$, between $q_{i_{1}}$ and $q_{j}$ and between $q_{i_{2}}$ and $q_{j}$ are fixed by (\ref{Angles}), the configuration of the point masses $q_{i_{1}}$, $q_{i_{2}}$ and $q_{j}$ is fixed for any $j$, which means that the configuration of the point masses can only rotate, but otherwise stays unchanged over time, which contradicts that the size of the configuration of the point masses is not constant. This means by extension that for all but perhaps one $i\in\{1,...,n\}$, (\ref{Crit1}), (\ref{Crit2}) and (\ref{Crit3}) have to have at least one term for which $\cos{(\alpha_{j}-\alpha_{i})}-\cos{(\beta_{j}-\beta_{i})})\neq 0$.
  \end{proof}
  Next we will prove a lemma that serves as a check on restrictions on the $q_{i}$, $i\in\{1,...,n\}$ due to the $q_{i}$ being the position vectors of a positive elliptic-elliptic rotopulsator for which the $r_{i}$ and $\rho_{i}$ are independent of $i$ and not constant:
  \begin{lemma}\label{Lemma 4}
    Let $q_{1},...,q_{n}$ be a positive elliptic-elliptic rotopulsator solution of (\ref{EquationsOfMotion Curved}) with $r_{i}=r$ and $\rho_{i}=\rho$ independent of $i$ and not constant. Let $i_{1}$, $i_{2}$, $i_{3}\in\{1,...,n\}$. Then
    \begin{align}
         &\left((1+B_{i_{1}i_{2},i_{1}i_{3}})+r^{2}(A_{i_{1}i_{2},i_{1}i_{3}}-B_{i_{1}i_{2},i_{1}i_{3}})\right)^{2}\nonumber\\
         &=4(\cos^{2}{\gamma_{i_{1}i_{2},i_{1}i_{3}}})\left(1-\cos{(\beta_{i_{2}}-\beta_{i_{1}})}-r^{2}(\cos{(\alpha_{i_{2}}-\alpha_{i_{1}})}-\cos{(\beta_{i_{2}}-\beta_{i_{1}})})\right)\nonumber\\
         &\cdot\left(1-\cos{(\beta_{i_{3}}-\beta_{i_{1}})}-r^{2}(\cos{(\alpha_{i_{3}}-\alpha_{i_{1}})}-\cos{(\beta_{i_{3}}-\beta_{i_{1}})})\right)\label{AlphaBetaCases1}
    \end{align}
    and
    \begin{align}
         &\left((1+A_{i_{1}i_{2},i_{1}i_{3}})+\rho^{2}(B_{i_{1}i_{2},i_{1}i_{3}}-A_{i_{1}i_{2},i_{1}i_{3}})\right)^{2}\nonumber\\
         &=4(\cos^{2}{\gamma_{i_{1}i_{2},i_{1}i_{3}}})\left(1-\cos{(\alpha_{i_{2}}-\alpha_{i_{1}})}-\rho^{2}(\cos{(\beta_{i_{2}}-\beta_{i_{1}})}-\cos{(\alpha_{i_{2}}-\alpha_{i_{1}})})\right)\nonumber\\
         &\cdot\left(1-\cos{(\alpha_{i_{3}}-\alpha_{i_{1}})}-\rho^{2}(\cos{(\beta_{i_{3}}-\beta_{i_{1}})}-\cos{(\alpha_{i_{3}}-\alpha_{i_{1}})})\right),\label{AlphaBetaCases2}
    \end{align}
    where $\gamma_{i_{1}i_{2},i_{1}i_{3}}$ is the constant angle between $q_{i_{2}}-q_{i_{1}}$ and $q_{i_{3}}-q_{i_{1}}$, \begin{align*}A_{i_{1}i_{2},i_{1}i_{3}}=\cos{(\alpha_{i_{3}}-\alpha_{i_{2}})}-\cos{(\alpha_{i_{2}}-\alpha_{i_{1}})}-\cos{(\alpha_{i_{3}}-\alpha_{i_{1}})}\end{align*} and \begin{align*}B_{i_{1}i_{2},i_{1}i_{3}}=\cos{(\beta_{i_{3}}-\beta_{i_{2}})}-\cos{(\beta_{i_{2}}-\beta_{i_{1}})}-\cos{(\beta_{i_{3}}-\beta_{i_{1}})}. \end{align*}
  \end{lemma}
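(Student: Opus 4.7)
The plan is to recognise both displayed identities as instances of the Cauchy--Schwarz identity
\[
\cos^{2}\gamma_{i_{1}i_{2},i_{1}i_{3}}\cdot|q_{i_{2}}-q_{i_{1}}|^{2}|q_{i_{3}}-q_{i_{1}}|^{2}=\langle q_{i_{2}}-q_{i_{1}},q_{i_{3}}-q_{i_{1}}\rangle^{2},
\]
evaluated using the explicit coordinates supplied by (\ref{Rotopulsator identities}). Everything reduces to algebra once the pieces are in hand, so there is no genuine obstacle beyond bookkeeping.

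First I would establish the basic inner product formula. Writing out coordinates and using $\cos(x-y)=\cos x\cos y+\sin x\sin y$, (\ref{Rotopulsator identities}) yields $\langle q_{i},q_{j}\rangle=r^{2}\cos(\alpha_{j}-\alpha_{i})+\rho^{2}\cos(\beta_{j}-\beta_{i})$, with the special case $\langle q_{i},q_{i}\rangle=r^{2}+\rho^{2}=1$. Expanding $\langle q_{i_{2}}-q_{i_{1}},q_{i_{3}}-q_{i_{1}}\rangle$ by bilinearity and collecting the six cosine contributions according to whether they carry an $\alpha$-- or a $\beta$-difference then gives
\[
\langle q_{i_{2}}-q_{i_{1}},q_{i_{3}}-q_{i_{1}}\rangle=1+r^{2}A_{i_{1}i_{2},i_{1}i_{3}}+\rho^{2}B_{i_{1}i_{2},i_{1}i_{3}},
\]
with $A$ and $B$ exactly as defined in the statement. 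In parallel, $|q_{i_{a}}-q_{i_{1}}|^{2}=2-2\langle q_{i_{1}},q_{i_{a}}\rangle$ for $a\in\{2,3\}$ expands to $2\bigl(1-r^{2}\cos(\alpha_{i_{a}}-\alpha_{i_{1}})-\rho^{2}\cos(\beta_{i_{a}}-\beta_{i_{1}})\bigr)$.

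To obtain (\ref{AlphaBetaCases1}) I would then substitute $\rho^{2}=1-r^{2}$ everywhere: the inner product rewrites as $1+B_{i_{1}i_{2},i_{1}i_{3}}+r^{2}(A_{i_{1}i_{2},i_{1}i_{3}}-B_{i_{1}i_{2},i_{1}i_{3}})$, and each squared norm $|q_{i_{a}}-q_{i_{1}}|^{2}$ takes exactly the $(\beta,r^{2})$-shape displayed on the right-hand side of (\ref{AlphaBetaCases1}). Squaring the inner product and multiplying by the two rewritten norms reproduces (\ref{AlphaBetaCases1}) verbatim, with the factor $4$ coming from the two factors of $2$ in the norm expansions. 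Identity (\ref{AlphaBetaCases2}) follows by running the same computation with the dual substitution $r^{2}=1-\rho^{2}$, which symmetrically swaps the roles of $(\alpha,A,r^{2})$ and $(\beta,B,\rho^{2})$.

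The main mild obstacle is purely clerical: keeping the three-index cosine differences in $A_{i_{1}i_{2},i_{1}i_{3}}$ and $B_{i_{1}i_{2},i_{1}i_{3}}$ correctly signed when telescoping the six cross terms in $\langle q_{i_{2}}-q_{i_{1}},q_{i_{3}}-q_{i_{1}}\rangle$, and making sure the single substitution $\rho^{2}=1-r^{2}$ (resp.\ $r^{2}=1-\rho^{2}$) is applied consistently in every factor. Notably, the proof is pointwise in $t$ and does not require $\gamma_{i_{1}i_{2},i_{1}i_{3}}$ to be genuinely time-independent — the identity is the defining relation between $\cos^{2}\gamma_{i_{1}i_{2},i_{1}i_{3}}$ and the three Euclidean scalar quantities it connects.
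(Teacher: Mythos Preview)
Your proposal is correct and follows essentially the same route as the paper: both arguments square the angle-defining relation $\langle q_{i_{2}}-q_{i_{1}},q_{i_{3}}-q_{i_{1}}\rangle=\|q_{i_{2}}-q_{i_{1}}\|\|q_{i_{3}}-q_{i_{1}}\|\cos\gamma_{i_{1}i_{2},i_{1}i_{3}}$, evaluate each side via (\ref{Rotopulsator identities}) and $\langle q_{i},q_{j}\rangle=r^{2}\cos(\alpha_{j}-\alpha_{i})+\rho^{2}\cos(\beta_{j}-\beta_{i})$, and then substitute $\rho^{2}=1-r^{2}$ (resp.\ $r^{2}=1-\rho^{2}$) to obtain (\ref{AlphaBetaCases1}) and (\ref{AlphaBetaCases2}). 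Your closing remark that the identity is pointwise and does not itself rely on constancy of $\gamma_{i_{1}i_{2},i_{1}i_{3}}$ is a valid side observation; the paper invokes constancy only because it is part of the rotopulsator hypothesis, not because the algebra requires it.
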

  \begin{proof}
    Because the $q_{i}$, $i\in\{1,...,n\}$ are the position vectors of the point masses of a rotopulsator, the shape of the configuration of the point masses has to remain unchanged over time. This means in particular that
    \begin{align*}
      \langle q_{i_{2}}-q_{i_{1}}, q_{i_{3}}-q_{i_{1}}\rangle=\|q_{i_{2}}-q_{i_{1}}\|\|q_{i_{3}}-q_{i_{1}}\|\cos{\gamma_{i_{1}i_{2},i_{1}i_{3}}},
    \end{align*}
    which means that
    \begin{align}\label{Go-time}
      \langle q_{i_{2}}-q_{i_{1}}, q_{i_{3}}-q_{i_{1}}\rangle^{2}=\|q_{i_{2}}-q_{i_{1}}\|^{2}\|q_{i_{3}}-q_{i_{1}}\|^{2}\cos^{2}{\gamma_{i_{1}i_{2},i_{1}i_{3}}}.
    \end{align}
    Writing out the left-hand side of (\ref{Go-time}) gives
    \begin{align*}
      \langle q_{i_{2}}-q_{i_{1}}, q_{i_{3}}-q_{i_{1}}\rangle^{2}&=\left(1-\langle q_{i_{1}}, q_{i_{2}}+q_{i_{3}}\rangle+\langle q_{i_{2}},q_{i_{3}}\rangle\right)^{2}\nonumber\\
      &=\left(1-\left(r^{2}(\cos{(\alpha_{i_{2}}-\alpha_{i_{1}})}+\cos{(\alpha_{i_{3}}-\alpha_{i_{1}})})\right.\right.\\
      &+\left.\rho^{2}(\cos{(\beta_{i_{2}}-\beta_{i_{1}})}+\cos{(\beta_{i_{3}}-\beta_{i_{1}})})\right)\\
      &\left.\left.+r^{2}\cos{(\alpha_{i_{3}}-\alpha_{i_{2}})} +\rho^{2}\cos{(\beta_{i_{3}}-\beta_{i_{2}})}\right)\right)^{2},
    \end{align*}
    which, as $r^{2}+\rho^{2}=1$, gives
    \begin{align}\label{AlphaBetaCases1Preliminary}
      \langle q_{i_{2}}-q_{i_{1}}, q_{i_{3}}-q_{i_{1}}\rangle^{2}=\left((1+B_{i_{1}i_{2},i_{1}i_{3}})+r^{2}(A_{i_{1}i_{2},i_{1}i_{3}}-B_{i_{1}i_{2},i_{1}i_{3}})\right)^{2}.
    \end{align}
    Writing out the right-hand side of (\ref{Go-time}) gives
    \begin{align*}
      &\|q_{i_{2}}-q_{i_{1}}\|^{2}\|q_{i_{3}}-q_{i_{1}}\|^{2}\cos^{2}{\gamma_{i_{1}i_{2},i_{1}i_{3}}}\\
      &=4(1-\langle q_{i_{1}},q_{i_{2}}\rangle)(1-\langle q_{i_{1}},q_{i_{3}}\rangle)\cos^{2}{\gamma_{i_{1}i_{2},i_{1}i_{3}}}\\
      &=4(1-(r^{2}\cos{(\alpha_{i_{2}}-\alpha_{i_{1}})}+\rho^{2}\cos{(\beta_{i_{2}}-\beta_{i_{1}})}))\\
      &\cdot(1-(r^{2}\cos{(\alpha_{i_{3}}-\alpha_{i_{1}})}+\rho^{2}\cos{(\beta_{i_{3}}-\beta_{i_{1}})}))\cos^{2}{\gamma_{i_{1}i_{2},i_{1}i_{3}}},
    \end{align*}
    which, using again that $r^{2}+\rho^{2}=1$, gives
    \begin{align}
      &\|q_{i_{2}}-q_{i_{1}}\|^{2}\|q_{i_{3}}-q_{i_{1}}\|^{2}\cos^{2}{\gamma_{i_{1}i_{2},i_{1}i_{3}}}\nonumber\\
      &=4(\cos^{2}{\gamma_{i_{1}i_{2},i_{1}i_{3}}})\left(1-\cos{(\beta_{i_{2}}-\beta_{i_{1}})-r^{2}(\cos{(\alpha_{i_{2}}-\alpha_{i_{1}})}-\cos{(\beta_{i_{2}}-\beta_{i_{1}})})}\right)\nonumber\\
      &\cdot\left(1-\cos{(\beta_{i_{3}}-\beta_{i_{1}})-r^{2}(\cos{(\alpha_{i_{3}}-\alpha_{i_{1}})}-\cos{(\beta_{i_{3}}-\beta_{i_{1}})})}\right).\label{AlphaBetaCases1Preliminary2}
    \end{align}
    Combining (\ref{AlphaBetaCases1Preliminary}) and (\ref{AlphaBetaCases1Preliminary2}) then gives (\ref{AlphaBetaCases1}) and interchanging the roles of the $\alpha$s and the $\beta$s, of the $A$s and the $B$s and of $r$ and $\rho$ gives (\ref{AlphaBetaCases2}).
  \end{proof}
  \begin{lemma}
    Let $q_{1},...,q_{n}$ be a positive elliptic-elliptic rotopulsator solution of (\ref{EquationsOfMotion Curved}) with $r_{i}=r$ and $\rho_{i}=\rho$ independent of $i$ and not constant and a configuration of nonconstant size. Then for all $i$, $j\in\{1,...,n\}$, $i\neq j$, we have that $\cos{(\alpha_{j}-\alpha_{i})}-\cos{(\beta_{j}-\beta_{i})}\neq 0$ and for all $k\in\{1,...,n\}\backslash\{i,j\}$, we have that \begin{align}\label{The Ultimate Identity1}
      \frac{1-\cos{(\beta_{i}-\beta_{j})}}{\cos{(\alpha_{i}-\alpha_{j})}-\cos{(\beta_{i}-\beta_{j})}}&=\frac{1-\cos{(\beta_{k}-\beta_{j})}}{\cos{(\alpha_{k}-\alpha_{j})}-\cos{(\beta_{k}-\beta_{j})}}.
    \end{align}
    and
    \begin{align}\label{The Ultimate Identity2}
      \frac{1-\cos{(\alpha_{i}-\alpha_{j})}}{\cos{(\alpha_{i}-\alpha_{j})}-\cos{(\beta_{i}-\beta_{j})}}&=\frac{1-\cos{(\alpha_{k}-\alpha_{j})}}{\cos{(\alpha_{k}-\alpha_{j})}-\cos{(\beta_{k}-\beta_{j})}}.    \end{align}
  \end{lemma}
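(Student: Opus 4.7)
My plan is to promote the constraint from Lemma~\ref{Lemma 4} to a polynomial identity in $s:=r^{2}$ and exploit the fact that its left-hand side is a perfect square. Fix $j\in\{1,\ldots,n\}$ that is not the possibly-exceptional index of Lemma~\ref{Lemma 3}, and use that lemma to choose $k\neq j$ with $\cos(\alpha_{k}-\alpha_{j})-\cos(\beta_{k}-\beta_{j})\neq 0$. For any other $i\notin\{j,k\}$, apply Lemma~\ref{Lemma 4} with $(i_{1},i_{2},i_{3})=(j,i,k)$. Since $r$ is nonconstant and continuous, $s=r^{2}$ ranges over a nontrivial interval, so (\ref{AlphaBetaCases1}) must hold as an equality of polynomials in $s$ of degree at most two.

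The left-hand side is the square of the linear polynomial $h(s)=(1+B_{ji,jk})+s(A_{ji,jk}-B_{ji,jk})$, and the right-hand side is $4\cos^{2}\gamma_{ji,jk}\cdot f_{i}(s)g_{k}(s)$, where
\[
f_{i}(s)=(1-\cos(\beta_{i}-\beta_{j}))-s(\cos(\alpha_{i}-\alpha_{j})-\cos(\beta_{i}-\beta_{j}))
\]
and $g_{k}$ is the same expression with $i$ replaced by $k$. When $\cos\gamma_{ji,jk}\neq 0$, the product $f_{i}g_{k}$ is a scalar multiple of the perfect square $h^{2}$; but a product of two linear polynomials is itself a perfect square only when the two factors are proportional, so there exists a scalar $c$ with $f_{i}\equiv c\,g_{k}$. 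Matching constant and linear coefficients immediately yields (\ref{The Ultimate Identity1}). The nonvanishing $\cos(\alpha_{i}-\alpha_{j})-\cos(\beta_{i}-\beta_{j})\neq 0$ falls out of the same proportionality: if the linear coefficient of $f_{i}$ vanished then $c=0$, so $f_{i}\equiv 0$, forcing $\cos(\beta_{i}-\beta_{j})=1$ and $\cos(\alpha_{i}-\alpha_{j})=1$; this gives $q_{i}=q_{j}$, which makes the right-hand side of (\ref{EquationsOfMotion Curved}) undefined. Identity (\ref{The Ultimate Identity2}) is then obtained by running the verbatim argument on (\ref{AlphaBetaCases2}). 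The pair involving the single exceptional index of Lemma~\ref{Lemma 3} is handled by the symmetry $\cos(\alpha_{i}-\alpha_{j})=\cos(\alpha_{j}-\alpha_{i})$, which lets one interchange the two indices and use the anchor supplied by the non-exceptional one.

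The step I expect to be the main obstacle is the degenerate case $\cos\gamma_{ji,jk}=0$, in which (\ref{AlphaBetaCases1}) collapses to $h(s)^{2}=0$, forcing $A_{ji,jk}=B_{ji,jk}=-1$ but supplying no immediate information about $f_{i}$ or $g_{k}$. The plan to circumvent this is twofold: first, show that one can always refine the choice of anchor $k$ so that $\cos\gamma_{ji,jk}\neq 0$ (only the \emph{existence} of one good anchor is needed, and the shape of the configuration is rigid enough that right angles between all admissible chords $q_{i}-q_{j}$ and $q_{k}-q_{j}$ cannot persist); second, as a backup, use that $A_{ji,jk}=B_{ji,jk}=-1$ translates, via half-angle identities, into coincidences among the angles $\alpha_{i}-\alpha_{j}$, $\alpha_{k}-\alpha_{j}$, $\alpha_{i}-\alpha_{k}$ (and similarly for the $\beta$'s) that can be ruled out by the noncoincidence of the point masses and the symmetric use of (\ref{AlphaBetaCases2}) with the $\alpha$'s and $\beta$'s interchanged.
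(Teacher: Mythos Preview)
Your core argument is exactly the paper's: treat (\ref{AlphaBetaCases1}) as a polynomial identity in $s=r^{2}$, note that the left-hand side is a perfect square, and conclude that the two affine factors $f_{i}$ and $g_{k}$ on the right must share their root, which is precisely (\ref{The Ultimate Identity1}). Your degree-counting argument for the case where the linear coefficient of $f_{i}$ vanishes (forcing $f_{i}\equiv 0$ and hence $q_{i}=q_{j}$) is also what the paper does, and your use of symmetry to recover the possibly exceptional index from Lemma~\ref{Lemma 3} is fine.

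The one genuine gap is the right-angle case $\cos\gamma_{ji,jk}=0$, and neither of your two proposed workarounds is solid as stated. For plan~1 you would need to exhibit an anchor $k'$ satisfying \emph{both} $\cos(\alpha_{k'}-\alpha_{j})\neq\cos(\beta_{k'}-\beta_{j})$ and $\gamma_{ji,jk'}\neq\pi/2$, and nothing you have written guarantees such a $k'$ exists; plan~2 is only a sketch. The paper resolves this much more cheaply, and without leaving the triangle $\{j,i,k\}$: if $\gamma_{ji,jk}=\pi/2$, then the other two interior angles $\gamma_{ij,ik}$ and $\gamma_{kj,ki}$ cannot also be right angles, so run the identical perfect-square argument at the vertices $i$ and $k$ instead of at $j$. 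This yields, for \emph{every} triple, the dichotomy ``all three ratios coincide'' or ``all three differences $\cos(\alpha\cdot)-\cos(\beta\cdot)$ vanish''. Because the dichotomy holds for every triangle, a single pair with vanishing difference would force the difference to vanish for all pairs, contradicting Lemma~\ref{Lemma 3}; hence no difference vanishes and (\ref{The Ultimate Identity1})--(\ref{The Ultimate Identity2}) hold globally. Swapping your anchor-refinement plan for this triangle-rotation trick closes the gap with no further work.
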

  \begin{proof}
    Note that in (\ref{AlphaBetaCases1}), if $4\cos^{2}{\gamma_{i_{1}i_{2},i_{1}i_{3}}}\neq 0$, as $r$ is not constant, we have that the left-hand side of (\ref{AlphaBetaCases1}) is a polynomial in terms of $r^{2}$ of degree two with a double root and the right-hand side of (\ref{AlphaBetaCases1}) is a polynomial of degree two in terms of $r^{2}$ that therefore also needs to have a double root, which means that
    \begin{align*}
      \frac{1-\cos{(\beta_{i_{2}}-\beta_{i_{1}})}}{\cos{(\alpha_{i_{2}}-\alpha_{i_{1}})}-\cos{(\beta_{i_{2}}-\beta_{i_{1}})}}=\frac{1-\cos{(\beta_{i_{3}}-\beta_{i_{1}})}}{\cos{(\alpha_{i_{3}}-\alpha_{i_{1}})}-\cos{(\beta_{i_{3}}-\beta_{i_{1}})}}
    \end{align*}
    if $\cos{(\alpha_{i_{2}}-\alpha_{i_{1}})}-\cos{(\beta_{i_{2}}-\beta_{i_{1}})}\neq 0$ and $\cos{(\alpha_{i_{3}}-\alpha_{i_{1}})}-\cos{(\beta_{i_{3}}-\beta_{i_{1}})}\neq 0$. If $\cos{(\alpha_{i_{2}}-\alpha_{i_{1}})}-\cos{(\beta_{i_{2}}-\beta_{i_{1}})}=0$, or $\cos{(\alpha_{i_{3}}-\alpha_{i_{1}})}-\cos{(\beta_{i_{3}}-\beta_{i_{1}})}=0$, then again by (\ref{AlphaBetaCases1}) we have that both $\cos{(\alpha_{i_{2}}-\alpha_{i_{1}})}-\cos{(\beta_{i_{2}}-\beta_{i_{1}})}=0$ and $\cos{(\alpha_{i_{3}}-\alpha_{i_{1}})}-\cos{(\beta_{i_{3}}-\beta_{i_{1}})}=0$, as otherwise the degrees of the polynomials on the left-hand side and the right-hand side of (\ref{AlphaBetaCases1}) would not match. Suppose that $\gamma_{i_{1}i_{2},i_{1}i_{3}}=\frac{\pi}{2}$. Then we repeat this argument for $\gamma_{i_{3}i_{2},i_{3}i_{1}}$ and $\gamma_{i_{2}i_{1},i_{2}i_{3}}$, which cannot be equal to $\frac{\pi}{2}$, as a triangle can have at most one right angle, instead, finding that either
    \begin{align*}
      \frac{1-\cos{(\beta_{i_{2}}-\beta_{i_{1}})}}{\cos{(\alpha_{i_{2}}-\alpha_{i_{1}})}-\cos{(\beta_{i_{2}}-\beta_{i_{1}})}}&=\frac{1-\cos{(\beta_{i_{3}}-\beta_{i_{1}})}}{\cos{(\alpha_{i_{3}}-\alpha_{i_{1}})}-\cos{(\beta_{i_{3}}-\beta_{i_{1}})}}\\
      &=\frac{1-\cos{(\beta_{i_{3}}-\beta_{i_{2}})}}{\cos{(\alpha_{i_{3}}-\alpha_{i_{2}})}-\cos{(\beta_{i_{3}}-\beta_{i_{2}})}},
    \end{align*}
    or that $\cos{(\alpha_{i_{k}}-\alpha_{i_{s}})}-\cos{(\beta_{i_{k}}-\beta_{i_{s}})}=0$ for all $k$, $s\in\{1,2,3\}$.
    As this last result holds true for every triangle of point masses, we have that if there are $i$, $j\in\{1,...,n\}$, $i\neq j$ for which $\cos{(\alpha_{j}-\alpha_{i})}-\cos{(\beta_{j}-\beta_{i})}=0$, then we have that $\cos{(\alpha_{j}-\alpha_{i})}-\cos{(\beta_{j}-\beta_{i})}=0$ for all $i$, $j\in\{1,...,n\}$. As by Lemma~\ref{Lemma 3} there are $i$, $j\in\{1,...,n\}$ for which  \begin{align*}\cos{(\alpha_{j}-\alpha_{i})}-\cos{(\beta_{j}-\beta_{i})}\neq 0,\end{align*} this means that $\cos{(\alpha_{j}-\alpha_{i})}-\cos{(\beta_{j}-\beta_{i})}\neq 0$ for all $i$, $j\in\{1,...,n\}$, $i\neq j$, so that means that (\ref{The Ultimate Identity1}) holds for all $i$, $j$, $k\in\{1,...,n\}$, $i\neq j$, $i\neq k$, $j\neq $. Repeating the argument so far after interchanging the roles of the $\alpha$s and the $\beta$s and $r$ and $\rho$ then proves that (\ref{The Ultimate Identity2}) holds for all $i$, $j$, $k\in\{1,...,n\}$, $i\neq j$, $i\neq k$, $j\neq k$ as well. This completes the proof.
  \end{proof}
  We can now conclude:
  \begin{lemma}\label{Lemma 2}
    Let $q_{1},...,q_{n}$ be a positive elliptic-elliptic rotopulsator solution of (\ref{EquationsOfMotion Curved}) with a configuration of nonconstant size, $r_{i}=r$ and $\rho_{i}=\rho$ independent of $i$ and not constant. Let $A_{j_{1}i_{1}}$ and $A_{j_{2}i_{2}}$ be nonzero constants, $i_{1}$, $i_{2}$, $j_{1}$, $j_{2}\in\{1,...,n\}$, $j_{1}\neq i_{1}$ and $j_{2}\neq i_{2}$. Then for terms
    \begin{align*}
      \frac{A_{j_{1}i_{1}}}{(1-(\cos{(\beta_{j_{1}}-\beta_{i_{1}})}+r^{2}(\cos{(\alpha_{j_{1}}-\alpha_{i_{1}})}-\cos{(\beta_{j_{1}}-\beta_{i_{1}})}))^{2})^{\frac{3}{2}}}
    \end{align*}
    to cancel out against terms
    \begin{align*}
      \frac{A_{j_{2}i_{2}}}{(1-(\cos{(\beta_{j_{2}}-\beta_{i_{2}})}+r^{2}(\cos{(\alpha_{j_{2}}-\alpha_{i_{2}})}-\cos{(\beta_{j_{2}}-\beta_{i_{2}})}))^{2})^{\frac{3}{2}}}
    \end{align*}
    in the sums in the right-hand sides of (\ref{Crit1}), (\ref{Crit2}) and (\ref{Crit3}), we need that \begin{align*}\cos{(\alpha_{j_{1}}-\alpha_{i_{1}})}=\cos{(\alpha_{j_{2}}-\alpha_{i_{2}})}\textrm{ and }\cos{(\beta_{j_{1}}-\beta_{i_{1}})}=\cos{(\beta_{j_{2}}-\beta_{i_{2}})}. \end{align*}
    \end{lemma}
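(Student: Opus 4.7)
The plan is to convert the cancellation condition into a polynomial identity in $u := r^{2}$ and then match the roots of the two degree-two polynomials that appear, using the identity (\ref{The Ultimate Identity1}) from the previous lemma.

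Set $c_{k} := \cos(\beta_{j_{k}}-\beta_{i_{k}})$, $d_{k} := \cos(\alpha_{j_{k}}-\alpha_{i_{k}})$, and $D_{k}(u) := 1 - (c_{k}+u(d_{k}-c_{k}))^{2}$ for $k \in \{1,2\}$. By the previous lemma, $d_{k}-c_{k} \neq 0$, so each $D_{k}$ is a polynomial in $u$ of degree exactly $2$. Because $r$ is continuous and nonconstant, $u$ ranges over an open interval on which both $D_{k}$ are strictly positive, so the assumed cancellation $A_{j_{1}i_{1}} D_{1}^{-3/2} + A_{j_{2}i_{2}} D_{2}^{-3/2} \equiv 0$ there yields, after clearing denominators, squaring, and extracting the positive cube root on the interval, the polynomial identity $D_{2} = \kappa\, D_{1}$ in $u$, with $\kappa := (A_{j_{2}i_{2}}^{2}/A_{j_{1}i_{1}}^{2})^{1/3} > 0$.

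Next I would locate the roots of $D_{k}$: they solve $c_{k}+u(d_{k}-c_{k}) = \pm 1$, giving $u = (1-c_{k})/(d_{k}-c_{k})$ and $u = -(1+c_{k})/(d_{k}-c_{k})$. By (\ref{The Ultimate Identity1}), the quantity $\lambda := (1-c_{ij})/(d_{ij}-c_{ij})$ is independent of the pair $(i,j)$, so $D_{1}$ and $D_{2}$ automatically share the root $u = \lambda$. The proportionality $D_{2} = \kappa\, D_{1}$ then forces the remaining roots to coincide as well, yielding $(1+c_{1})/(1-c_{1}) = (1+c_{2})/(1-c_{2})$, which on cross-multiplying simplifies to $c_{1} = c_{2}$. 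Substituting this into $d_{k}-c_{k} = (1-c_{k})/\lambda$ gives $d_{1} = d_{2}$, which is exactly the conclusion.

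The subtlest point will be the degenerate case $\lambda = 0$, which would force $c_{ij} = 1$ for every pair and hence all $\beta_{i}$ equal. The root-matching argument must then be rerun with $D_{k}(u) = -u(d_{k}-1)(2+u(d_{k}-1))$, whose nontrivial root is $u = 2/(1-d_{k})$; matching these once more yields $d_{1} = d_{2}$, while the condition on the $\beta$'s holds trivially. A final consistency check is that $c_{k} = 1$ cannot occur in the generic case $\lambda \neq 0$, which follows from $d_{k}-c_{k} = (1-c_{k})/\lambda$ together with the previous lemma's assertion that $d_{k}-c_{k} \neq 0$.
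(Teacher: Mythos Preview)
Your argument is correct and follows the same root-matching strategy as the paper: both reduce cancellation to equality of the root sets of the quadratics $D_{k}(u)$ in $u=r^{2}$, then use the previous lemma to force the cosines to agree. Your organisation is slightly cleaner---you invoke (\ref{The Ultimate Identity1}) once upfront to secure the common root $\lambda$, whereas the paper enumerates both possible root-pairings and appeals to (\ref{The Ultimate Identity1}) together with (\ref{The Ultimate Identity2}) to discard the crossed pairing---but the substance is the same.
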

  \begin{proof}
    Note that if $\cos{(\alpha_{j_{1}}-\alpha_{i_{1}})}\neq\cos{(\beta_{j_{1}}-\beta_{i_{1}})}$, then terms
    \begin{align*}
      \frac{A_{j_{1}i_{1}}}{(1-(\cos{(\beta_{j_{1}}-\beta_{i_{1}})}+r^{2}(\cos{(\alpha_{j_{1}}-\alpha_{i_{1}})}-\cos{(\beta_{j_{1}}-\beta_{i_{1}})}))^{2})^{\frac{3}{2}}}
    \end{align*}
    are linearly independent for different values of $i_{1}$, $j_{1}$ if and only if the roots of the polynomials $(1-(\cos{(\beta_{j_{1}}-\beta_{i})}+x(\cos{(\alpha_{j_{1}}-\alpha_{i})}-\cos{(\beta_{j_{1}}-\beta_{i})}))^{2})$ differ for different values of $j_{1}$. As the roots of \begin{align*}(1-(\cos{(\beta_{j_{1}}-\beta_{i_{1}})}+x(\cos{(\alpha_{j_{1}}-\alpha_{i_{1}})}-\cos{(\beta_{j_{1}}-\beta_{i_{1}})}))^{2})\end{align*} are \begin{align*}\frac{1-\cos{(\beta_{j_{1}}-\beta_{i_{1}})}}{\cos{(\alpha_{j_{1}}-\alpha_{i_{1}})}-\cos{(\beta_{j_{1}}-\beta_{i_{1}})}}\textrm{ and } -\frac{1+\cos{(\beta_{j_{1}}-\beta_{i_{1}})}}{\cos{(\alpha_{j_{1}}-\alpha_{i_{1}})}-\cos{(\beta_{j_{1}}-\beta_{i_{1}})}}\end{align*} if $\cos{(\alpha_{j_{1}}-\alpha_{i_{1}})}\neq\cos{(\beta_{j_{1}}-\beta_{i_{1}})}$, we have that terms
    \begin{align*}
      \frac{A_{j_{1}i_{1}}}{(1-(\cos{(\beta_{j_{1}}-\beta_{i_{1}})}+r^{2}(\cos{(\alpha_{j_{1}}-\alpha_{i_{1}})}-\cos{(\beta_{j_{1}}-\beta_{i_{1}})}))^{2})^{\frac{3}{2}}}
    \end{align*}
    can cancel out against terms
    \begin{align*}
      \frac{A_{j_{2}i_{2}}}{(1-(\cos{(\beta_{j_{2}}-\beta_{i_{2}})}+r^{2}(\cos{(\alpha_{j_{2}}-\alpha_{i_{2}})}-\cos{(\beta_{j_{2}}-\beta_{i_{2}})}))^{2})^{\frac{3}{2}}}
    \end{align*}
    if and only if
    \begin{align*}\frac{1-\cos{(\beta_{j_{1}}-\beta_{i_{1}})}}{\cos{(\alpha_{j_{1}}-\alpha_{i_{1}})}-\cos{(\beta_{j_{1}}-\beta_{i_{1}})}}=\frac{1-\cos{(\beta_{j_{2}}-\beta_{i_{2}})}}{\cos{(\alpha_{j_{2}}-\alpha_{i_{2}})}-\cos{(\beta_{j_{2}}-\beta_{i_{2}})}}\end{align*} and \begin{align*}-\frac{1+\cos{(\beta_{j_{1}}-\beta_{i_{1}})}}{\cos{(\alpha_{j_{1}}-\alpha_{i_{1}})}-\cos{(\beta_{j_{1}}-\beta_{i_{1}})}}=-\frac{1+\cos{(\beta_{j_{2}}-\beta_{i_{2}})}}{\cos{(\alpha_{j_{2}}-\alpha_{i_{2}})}-\cos{(\beta_{j_{2}}-\beta_{i_{2}})}},\end{align*}
    or
     \begin{align*}\frac{1-\cos{(\beta_{j_{1}}-\beta_{i_{1}})}}{\cos{(\alpha_{j_{1}}-\alpha_{i_{1}})}-\cos{(\beta_{j_{1}}-\beta_{i_{1}})}}=-\frac{1-\cos{(\beta_{j_{2}}-\beta_{i_{2}})}}{\cos{(\alpha_{j_{2}}-\alpha_{i_{2}})}-\cos{(\beta_{j_{2}}-\beta_{i_{2}})}}\end{align*} and \begin{align*}\frac{1+\cos{(\beta_{j_{1}}-\beta_{i_{1}})}}{\cos{(\alpha_{j_{1}}-\alpha_{i_{1}})}-\cos{(\beta_{j_{1}}-\beta_{i_{1}})}}=-\frac{1+\cos{(\beta_{j_{2}}-\beta_{i_{2}})}}{\cos{(\alpha_{j_{2}}-\alpha_{i_{2}})}-\cos{(\beta_{j_{2}}-\beta_{i_{2}})}}.\end{align*}
    The first of these possibilities is equivalent with $\cos{(\beta_{j_{1}}-\beta_{i_{1}})}=\cos{(\beta_{j_{2}}-\beta_{i_{2}})}$ and \begin{align*}
      \cos{(\alpha_{j_{1}}-\alpha_{i})}-\cos{(\beta_{j_{1}}-\beta_{i})}=\cos{(\alpha_{j_{2}}-\alpha_{i})}-\cos{(\beta_{j_{2}}-\beta_{i})},
    \end{align*}
    which, as
    \begin{align*}\frac{1-\cos{(\beta_{j}-\beta_{i})}}{\cos{(\alpha_{j}-\alpha_{i})}-\cos{(\beta_{j}-\beta_{i})}}&=\frac{1-\cos{(\alpha_{j}-\beta_{i})}+\cos{(\alpha_{j}-\beta_{i})}-\cos{(\alpha_{j}-\beta_{i})}}{\cos{(\alpha_{j}-\alpha_{i})}-\cos{(\beta_{j}-\beta_{i})}}\nonumber\\
    &=1+\frac{1-\cos{(\alpha_{j}-\beta_{i})}}{\cos{(\alpha_{j}-\alpha_{i})}-\cos{(\beta_{j}-\beta_{i})}},\end{align*}
    also means that $\cos{(\alpha_{j_{1}}-\alpha_{i_{1}})}=\cos{(\alpha_{j_{2}}-\alpha_{i_{2}})}$. \\
    The second of these possibilities can be disgarded by (\ref{The Ultimate Identity1}) and (\ref{The Ultimate Identity2}). This completes the proof.
  \end{proof}
  \section{Proof of Theorem~\ref{Main Theorem 1}}\label{Section proof of main theorem 1}
    We will assume that there are $i$, $j\in\{1,...,n\}$ such that $\alpha_{j}-\alpha_{i}\neq 0$. If this is not the case, then we switch the roles of the $\alpha$s and the $\beta$s. Additionally, let $S_{\alpha j}=\{s\in\{1,...,n\}|\alpha_{s}=\alpha_{j}\}$ and select $j_{1}$,...,$j_{k}\in\{1,...,n\}$ such that $S_{\alpha s_{1}}\cap S_{\alpha s_{2}}=\emptyset$ for $s_{1}$, $s_{2}\in\{j_{1},...,j_{k}\}$, $s_{1}\neq s_{2}$ and $\bigcup\limits_{u=1}^{k}S_{\alpha j_{u}}=\{1,...,n\}$, $0\leq\alpha_{j_{1}}<\alpha_{j_{2}}<...<\alpha_{j_{k}}<2\pi$ and $\alpha_{j_{2}}-\alpha_{j_{1}}\leq\alpha_{j_{u+1}}-\alpha_{j_{u}}$ for all $u\in\{1,...,k\}$, relabeling the point masses if necessary, where we define $\alpha_{j_{k+1}}=2\pi+\alpha_{j_{1}}$.  We will use a proof by contradiction. Assume that the vectors $(q_{i1},q_{i2})^{T}$, $i\in\{1,...,n\}$ do not form a regular polygon. Then there has to be a $j_{u}\in\{j_{1},...,j_{k}\}$ such that $\alpha_{j_{2}}-\alpha_{j_{1}}<\alpha_{j_{u+1}}-\alpha_{j_{u}}$, as otherwise we have that $\alpha_{j_{2}}-\alpha_{j_{1}}=\alpha_{j_{u+1}}-\alpha_{j_{u}}$ for all $j_{u}\in\{j_{1},...,j_{k}\}$, which then means that the $(q_{j1},q_{j2})^{T}$, $j\in\{1,...,n\}$ form a regular polygon with $k$ vertices. Let
    \begin{align*}
      C_{ij}(r)=\frac{\cos{(\alpha_{j}-\alpha_{i})}-\cos{(\beta_{j}-\beta_{i})}}{(1-(\cos{(\beta_{j}-\beta_{i})}+r^{2}(\cos{(\alpha_{j}-\alpha_{i})}-\cos{(\beta_{j}-\beta_{i})}))^{2})^{\frac{3}{2}}}.
    \end{align*}
    Then subtracting the identity obtained by taking $i=j_{1}$ in (\ref{Crit3}) from the identity obtained by taking $i=j_{2}$ in (\ref{Crit3}) we get that
    \begin{align}\label{Cosine equation}
      0=\left(\sum\limits_{s\in S_{\alpha j_{2}}}m_{s}-\sum\limits_{s\in S_{\alpha j_{1}}}m_{s}\right)C_{j_{1}j_{2}}(r)+\sum\limits_{v=3}^{k}\left(\sum\limits_{s\in S_{\alpha j_{v}}}m_{s}\right)\left(C_{j_{2}j_{v}}-C_{j_{1}j_{v}}\right).
    \end{align}
    Note that (\ref{Cosine equation}) does not involve sums $\sum\limits_{j\neq j_{1}, j\in S_{\alpha_{j_{1}}}}m_{j}C_{j_{1}j}$ and $\sum\limits_{j\neq j_{2}, j\in S_{\alpha_{j_{2}}}}m_{j}C_{j_{2}j}$, as for $\tilde j\neq j_{1}$, $\tilde j\in S_{\alpha_{j_{1}}}$ and $\widehat{j}\neq j_{2}$, $\widehat{j}\in S_{\alpha_{j_{2}}}$ we have that $\cos{(\alpha_{j_{1}}-\alpha_{\tilde j})}=1$ and $\cos{(\alpha_{j_{2}}-\alpha_{\widehat{j}})}=1$, so by Lemma~\ref{Lemma 2} we have that $C_{\tilde j j_{1}}$ and $C_{\widehat{j}j_{2}}$ are linearly independent from $C_{j_{1}j_{2}}$, $C_{j_{1}j_{v}}$, $C_{j_{2}j_{v}}$, $v\in\{3,...,k\}$ in (\ref{Cosine equation}). \\
    The only way for any term involving $C_{j_{1}j_{u}}$ in (\ref{Cosine equation}) to cancel out against a term $C_{j_{2}j_{\widehat{v}}}$ is by Lemma~\ref{Lemma 2} if $\cos{(\alpha_{j_{u}}-\alpha_{j_{1}})}=\cos{(\alpha_{j_{\widehat{v}}}-\alpha_{j_{2}})}$, which means that $\alpha_{j_{u}}-\alpha_{j_{1}}=\alpha_{j_{\widehat{v}}}-\alpha_{j_{2}}$, or $\alpha_{j_{u}}-\alpha_{j_{1}}=2\pi-(\alpha_{j_{\widehat{v}}}-\alpha_{j_{2}})$. If $\alpha_{j_{u}}-\alpha_{j_{1}}=\alpha_{j_{\widehat{v}}}-\alpha_{j_{2}}$, then $\alpha_{j_{\widehat{v}}}-\alpha_{j_{u}}=\alpha_{j_{2}}-\alpha_{j_{1}}<\alpha_{j_{u+1}}-\alpha_{j_{u}}$, giving $\alpha_{j_{\widehat{v}}}<\alpha_{j_{u+1}}$ and $\alpha_{j_{\widehat{v}}}-\alpha_{j_{u}}=\alpha_{j_{2}}-\alpha_{j_{1}}>0$, giving $\alpha_{j_{\widehat{v}}}>\alpha_{j_{u}}$ and therefore $\alpha_{j_{u}}<\alpha_{j_{\widehat{v}}}<\alpha_{j_{u+1}}$, which is impossible. So that means that for any term involving $C_{j_{1}j_{u}}$ in (\ref{Cosine equation}) to cancel out against a term $C_{j_{2}j_{\widehat{v}}}$ we have that $\alpha_{j_{u}}-\alpha_{j_{1}}=2\pi-(\alpha_{j_{\widehat{v}}}-\alpha_{j_{2}})$. Additionally there might be a $C_{j_{1}j_{w}}$, $j_{w}\in\{j_{1},...,j_{k}\}$ such that $\cos{(\alpha_{j_{u}}-\alpha_{j_{1}})}=\cos{(\alpha_{j_{w}}-\alpha_{j_{1}})}$. Let \begin{align*}V_{\alpha j_{\widehat{v}}}=\{s\in S_{\alpha j_{\widehat{v}}}|\cos{(\beta_{s}-\beta_{j_{2}})}=\cos{(\beta_{j_{\widehat{v}}}-\beta_{j_{2}})}\},\\ V_{\alpha j_{u}}=\{s\in S_{\alpha u}|\cos{(\beta_{s}-\beta_{j_{1}})}=\cos{(\beta_{j_{u}}-\beta_{j_{1}})}\}\end{align*} and $V_{\alpha j_{w}}=\{s\in S_{\alpha j_{w}}|\cos{(\beta_{w}-\beta_{j_{1}})}=\cos{(\beta_{j_{w}}-\beta_{j_{1}})}\}$. We then have by (\ref{Cosine equation}) that
    \begin{align}\label{MT1}0=\left(\sum\limits_{s\in V_{\alpha j_{\widehat{v}}}}m_{s}\right)-\left(\sum\limits_{s\in V_{\alpha j_{u}}}m_{s}\right)-\left(\sum\limits_{s\in V_{\alpha j_{w}}}m_{s}\right),\end{align}
    as $C_{j_{1}j_{u}}=C_{j_{2}j_{\widehat{v}}}=C_{j_{1}j_{w}}\neq 0$. Additionally, let
    \begin{align*}
      \widehat{S}_{ij}(r)=\frac{\sin{(\alpha_{j}-\alpha_{i})}}{(1-(\cos{(\beta_{j}-\beta_{i})}+r^{2}(\cos{(\alpha_{j}-\alpha_{i})}-\cos{(\beta_{j}-\beta_{i})}))^{2})^{\frac{3}{2}}}.
    \end{align*}
    Then by (\ref{Crit1}) we have that
    \begin{align}\label{MT3}0=\left(\sum\limits_{s\in V_{\alpha j_{\widehat{v}}}}m_{s}\right)\widehat{S}_{j_{2}j_{\widehat{v}}}.\end{align}
    If $\widehat{S}_{j_{1}j_{u}}=\widehat{S}_{j_{2}j_{\widehat{v}}}=0$, then $\sin{(\alpha_{j_{u}}-\alpha_{j_{1}})}=0$, which means that $\alpha_{j_{u}}-\alpha_{j_{1}}=0$ or $\alpha_{j_{u}}-\alpha_{j_{1}}=\pi$ and $\alpha_{j_{u}}-\alpha_{j_{1}}=2\pi-(\alpha_{j_{\widehat{v}}}-\alpha_{j_{2}})$, so $\alpha_{j_{u}}-\alpha_{j_{1}}=\pi$ and $\alpha_{j_{\widehat{v}}}-\alpha_{j_{2}}=\pi$, which contradicts the construction of $j_{u}$.
    We now therefore find the following: If there is a $\widehat{v}$ such that $\cos{(\alpha_{j_{u}}-\alpha_{j_{1}})}=\cos{(\alpha_{j_{\widehat{v}}}-\alpha_{j_{2}})}$, then we have by (\ref{MT3}) that $0\neq 0$, which is a contradiction. If there is no such $\widehat{v}$, then we have by (\ref{MT1}) that $0=0-\left(\sum\limits_{s\in V_{\alpha j_{u}}}m_{s}\right)-\left(\sum\limits_{s\in V_{\alpha j_{w}}}m_{s}\right)<0$, which is again a contradiction. This means that our initial assumption about the existence of $j_{u}$ is incorrect, which means that this proves that the configuration of the vectors $(q_{i1},q_{i2})^{T}$, $i\in\{1,...,n\}$, is a regular polygon and by repeating the argument so far after interchanging the roles of the $\alpha$s and $\beta$s, we find that the same is true for the configuration of the vectors $(q_{i3},q_{i4})^{T}$. This completes the proof.

\bibliographystyle{amsplain}

\end{document}